\theoremstyle{plain}
\newtheorem{theo}{Theorem}[section]
\newtheorem{lemm}[theo]{Lemma}
\newtheorem{prop}[theo]{Proposition}
\newtheorem{coro}[theo]{Corollary}
\theoremstyle{definition}
\newtheorem{defi}[theo]{Definition}
\theoremstyle{remark}
\newtheorem{rema}[theo]{Remark}
\newtheorem{conj}[theo]{Conjecture}
\newcommand{\Omin}{\mathcal{O}_{\textrm{min}}}
\newcommand{\Ominbar}{\overline{\mathcal{O}}_\textrm{min}}
\newcommand{\Cat}{\mathrm{Cat}}
\newcommand{\Wedge}{\scalebox{0.8}{\raisebox{0.4ex}{$\bigwedge$}}}
\newcommand{\slnn}{\mathfrak{sl}_{n+1}}
\newcommand{\slnnd}{\mathfrak{sl}_{n+d+1}}
\newcommand{\Lieg}{\mathfrak{g}}
\newcommand{\Lieh}{\mathfrak{h}}
\newcommand{\Sym}{\mathrm{Sym}}
\DeclareMathOperator{\End}{End}
\DeclareMathOperator{\Proj}{Proj}
\DeclareMathOperator{\Spec}{Spec}
\newcommand{\N}{\mathbb{N}}
\newcommand{\Z}{\mathbb{Z}}
\newcommand{\C}{\mathbb{C}}
\newcommand{\lambar}{{\overline{\lambda}}}
\title{Highest Weight Varieties and Narayana Numbers}
\author{Boming Jia}
\date{}
\begin{document}

\maketitle

\vspace{-5em}
\begin{abstract} 
    We compute the Hilbert series of the coordinate ring of some highest weight varieties. We also explain why Narayana numbers (and their generalizations) appear naturally in the numerator of the Hilbert series of the homogeneous coordinate ring of the Grassmannian $Gr(d,n+d+1)$ and of the minimal nilpotent adjoint orbit in $\mathfrak{sl}_\mathrm{n+1}(\mathbb{C})$.
\end{abstract}


\section{Introduction}
The sequence of Catalan numbers $1,2,5,14,42,429,1430,4862,\dots$ is ubiquitous in combinatorics and representation theory.
Defined by the formula
\[
\Cat_n \coloneqq \frac{1}{n+1} \binom{2n}{n},
\]
they count many combinatorial structures, such as triangulations of a convex polygon with \(n+2\) sides and Dyck paths. We refer to \cite{Stanley} for more than 200 interpretations of Catalan numbers.
 Narayana numbers are a refinement of Catalan numbers, which are defined by the formula
$$
N_{n, k}\coloneqq\frac{1}{n}\binom{n}{k}\binom{n}{k+1}.
$$
Note that the sums of the rows in the triangle of Narayana numbers are the Catalan numbers:
\begin{align*}
    \sum_{k=0}^n N_{n,k}
    &= \sum_{k=0}^n\binom{n-1}{k}\binom{n+1}{k+1}-\binom{n}{k}\binom{n}{k+1} \\
    &= \binom{2n}{n}-\binom{2n}{n-1}
    = \frac{1}{n+1}\binom{2n}{n}
    = \Cat_n.
\end{align*}
There are many combinatorial interpretations of the Narayana numbers; for example, $N_{n,k}$ counts $231$-pattern-avoiding permutations in $S_n$ with $k$ descents.

In this paper, we explain how Narayana numbers arise naturally in the context of highest weight varieties. A highest weight variety is the Zariski closure of the orbit of a highest weight vector under the action of a complex simple Lie group. The Hilbert series of a graded algebra encodes the dimensions of its homogeneous components and is a useful tool for studying coordinate rings. We compute Hilbert series for several highest weight varieties and explain why Narayana numbers, and their generalizations, appear in the numerators for the Grassmannian \(Gr(d,n+d+1)\) and the minimal nilpotent adjoint orbit in \(\mathfrak{sl}_{n+1}(\mathbb{C})\).

The paper is organized as follows. In Section $2$, we introduce highest weight varieties and relate the Hilbert series of their coordinate rings to dimensions of irreducible representations. In Section $3$, we compute the Hilbert series of the homogeneous coordinate ring of the Grassmannian $Gr(d,n+d+1)$ and explain the appearance of higher-dimensional Narayana numbers in the numerator. In Section $4$, we compute the Hilbert series of the coordinate ring of the minimal nilpotent orbit in $\mathfrak{sl}_{n+1}$ and show that its numerator consists of Narayana numbers of type $C_n$.

\vspace{2em}
\textbf{Acknowledgments.}
The author was supported by NSFC Grant No.~12225108 and the Shuimu Scholar Program in Tsinghua University. The author thanks Julia Feng for consistent support and encouragement, and thanks Vitaly Bergelson, Victor Ginzburg, Fanhao Kong, Pengcheng Li, Yanpeng Li, Yu Li, Jiang-Hua Lu, Quan Situ, Qingyu Ren, Nikolai Reshetikhin, Peng Shan, and Fang Yang for helpful suggestions and comments.

\section{Highest Weight Varieties.}
We now pass to the representation-theoretic setting needed for highest weight varieties and their Hilbert series.
The results in this section are well known, and we primarily follow Garfinkle's thesis \cite{Garfinkle} with slightly different notations.

Let $\Lieg$ be a complex simple Lie algebra. Let $G$ be a simply connected Lie group with the Lie algebra $\Lieg$. Let $(\pi_\lambda,V_\lambda)$ be an irreducible finite-dimensional representation of $\Lieg$ with non-trivial highest weight $\lambda$. 
Let $V^*_\lambda$ be the dual representation of $\Lieg$. Throughout the section, we identify the symmetric tensor algebra $\Sym(V_\lambda^*)$ of the dual representation $V^*_\lambda$ with the algebra of polynomial functions $\C[V_\lambda]$ on the representation $V_\lambda$.
\begin{defi}
	 Let $v_\lambda\neq0$ be a highest weight vector of $V_\lambda$. We define the highest weight variety 
	$$
	X_\lambda\coloneqq\overline{G v_\lambda},
	$$
    as the Zariski closure of the $G$-orbit of $v_\lambda$,
	and we denote its vanishing ideal by $I\subset\Sym(V_\lambda^*)$.
\end{defi}
\begin{rema} 
    Note that in Chapter III, Section 1 of \cite{Garfinkle}, the ideal $I$ was equivalently defined to be the vanishing ideal of the $G$ orbit of the highest weight vector, and the phrase ``Highest Weight Variety'' was not explicitly addressed.
\end{rema}
Let $v_1=v_\lambda,v_2,\cdots,v_n$ be a basis of $V_\lambda$ consisting of weight vectors, and let $v^*_1=v^*_\lambda,\cdots,v^*_n$ be the dual basis for $V^*_\lambda$ such that $v^*_i(v_j)=\delta_{ij}$ for all $i,j$. Let $\lambar$ be the highest weight for $V^*_\lambda$ so that $V_\lambar= V^*_\lambda$.
For each $k\in\Z_{\geq0}$, we define $$V_{k\overline{\lambda}}\subset \Sym^k(V_\lambda^*)$$ to be the simple $\Lieg$-module generated by $(v^*_\lambda)^k$, and $C_{k\lambar}$ be the unique $\Lieg$-invariant complement of $V_{k\lambar}$ in $\Sym^k(V_\lambda^*)$, so
	$$\Sym^k(V_\lambda^*)=V_{k\lambar}\oplus C_{k\lambar}.$$
Notice that $C_0=C_\lambar=\{0\}$.
\begin{prop}[Proposition III.1 in \cite{Garfinkle}]\label{prop1} The ideal $I\subset\Sym(V_\lambda^*)$ satisfies
	$$I=\bigoplus_{k=2}^{\infty}\,C_{k\lambar}.$$
\end{prop}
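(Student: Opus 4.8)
The plan is to exploit the fact that $X_\lambda$ is an affine cone, so that the vanishing ideal $I$ is a homogeneous, $G$-stable ideal, and then to identify each graded piece $I_k\coloneqq I\cap\Sym^k(V_\lambda^*)$ as a $G$-submodule of $\Sym^k(V_\lambda^*)=V_{k\lambar}\oplus C_{k\lambar}$. To see that $X_\lambda$ is a cone, note that $\lambda$ is a nontrivial character of the maximal torus $T$, so its image $\lambda(T)\subseteq\C^\times$ is all of $\C^\times$ (a nonzero character of a torus is surjective, as $\C^\times$ is divisible); hence $\C^\times v_\lambda=T v_\lambda\subseteq Gv_\lambda$, and for $g\in G$, $c\in\C^\times$ we get $c(gv_\lambda)=g(cv_\lambda)\in Gv_\lambda$. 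Thus $Gv_\lambda$ is stable under scaling and $X_\lambda=\overline{Gv_\lambda}$ passes through the origin, so $I=\bigoplus_k I_k$ with each $I_k$ a $G$-submodule. Since a submodule of a semisimple module is the direct sum of its intersections with the isotypic components, and $V_{k\lambar}$ is the single Cartan component of highest weight $k\lambar$ (multiplicity one), it suffices to decide for each $k$ whether $I_k$ meets $V_{k\lambar}$.

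For the inclusion $I_k\subseteq C_{k\lambar}$, observe that the function $(v_\lambda^*)^k$ takes the value $\bigl(v_\lambda^*(v_\lambda)\bigr)^k=1$ at the point $v_\lambda\in X_\lambda$, so it does not vanish on $X_\lambda$. Hence $V_{k\lambar}\not\subseteq I_k$, and by irreducibility $I_k\cap V_{k\lambar}=0$; the isotypic decomposition of $I_k$ then forces $I_k\subseteq C_{k\lambar}$.

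For the reverse inclusion $C_{k\lambar}\subseteq I_k$ (for $k=0,1$ one has $C_{k\lambar}=\{0\}$ and nothing to prove), I would argue by $G$-equivariance. For $f\in C_{k\lambar}$ and $g\in G$ we have $f(g v_\lambda)=(g^{-1}\cdot f)(v_\lambda)$, and $g^{-1}\cdot f$ again lies in the $G$-stable space $C_{k\lambar}$, so it is enough to show that evaluation at $v_\lambda$ kills all of $C_{k\lambar}$. Writing $v_\lambda=v_1$, evaluation at $v_\lambda$ sends a monomial $v_{i_1}^*\cdots v_{i_k}^*$ to $\prod_j\delta_{i_j,1}$, so it simply extracts the coefficient of $(v_\lambda^*)^k$. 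But $(v_\lambda^*)^k$ is, up to scalar, the unique weight vector of the extreme (lowest) weight $-k\lambda$ in $\Sym^k(V_\lambda^*)$, and it lies in $V_{k\lambar}$; therefore $C_{k\lambar}$ has trivial $(-k\lambda)$-weight space, every $h\in C_{k\lambar}$ has vanishing $(v_\lambda^*)^k$-coefficient, and evaluation at $v_\lambda$ annihilates $C_{k\lambar}$. Hence $C_{k\lambar}$ vanishes on $Gv_\lambda$ and so on $X_\lambda$, giving $C_{k\lambar}\subseteq I_k$.

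Combining the two inclusions gives $I_k=C_{k\lambar}$ for every $k$ (both sides being zero for $k=0,1$, since $\Sym^1(V_\lambda^*)=V_{\lambar}$ is already irreducible), and summing over $k$ yields $I=\bigoplus_{k=2}^\infty C_{k\lambar}$. I expect the reverse inclusion to be the main obstacle: the nontrivial idea is to reduce vanishing on the entire orbit to vanishing at the single point $v_\lambda$ — using equivariance together with $G$-stability of $C_{k\lambar}$ — and then to convert this into the clean weight-multiplicity statement that the extreme weight $-k\lambda$ occurs only inside the Cartan component $V_{k\lambar}$.
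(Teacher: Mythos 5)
Your proof is correct and follows essentially the same route as the paper's: use the cone structure to reduce to graded pieces $I_k$, rule out the Cartan component $V_{k\lambar}$ by evaluating $(v_\lambda^*)^k$ at $v_\lambda$, and show $C_{k\lambar}$ vanishes on the orbit via $G$-equivariance plus the observation that every element of $C_{k\lambar}$ has zero $(v_\lambda^*)^k$-coefficient. You merely make explicit a few points the paper leaves implicit (the torus argument for scaling-invariance, the multiplicity-one isotypic decomposition, and the identification of $-k\lambda$ as the extreme weight), so no substantive difference.
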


\begin{proof}
	
Since $\lambda$ is nontrivial, the orbit $G v_\lambda$ is invariant under the scaling $\C^*$-action on $V_\lambda$. So we have
$$
	I=\bigoplus_{k=0}^{\infty}\,I\cap\Sym^k(V_\lambar).
$$

Let $k\in\Z_{\geq0}$, we denote $I_k\coloneqq I\cap\Sym^k(V_\lambar)$, so it suffices to show that $I_k=C_{k\lambar}$. 

Since $(v^*_1)^k(v_\lambda)\neq0$, we have $(v^*_1)^k\notin I_k$. But $I_k$ is a $\Lieg$-module and $V_{k\lambar}$ is a simple $\Lieg$-module containing the highest weight vector $(v^*_1)^k$, so we have 
$
	V_{k\lambar}\cap I_k=\{0\}.
$
This proves
$$
    I_k\subseteq C_{k\lambar}.
$$

Let $f\in C_{k\lambar}$. Since $f$ is a linear combination of monomials, each of which contains some factors of $v^*_2,\cdots,v^*_n$, we have $f(v_\lambda)=0$. Now we show that $f$ vanishes on $G.v_\lambda$. Since $G$ is simply connected, the $\Lieg$-module $C_{k\lambar}$ is also a $G$-module, so for all $g\in G$,
$$
    f(gv_\lambda)=(g^{-1}f)(v_\lambda)=0.
$$
So we have $f\in I_k$, and this proves
\[C_{k\lambar}\subseteq I_k.\qedhere\]
\end{proof}

Recall the definition and basic geometric setup for Hilbert series, which we will use repeatedly below.
Let $A$ be a graded commutative $\C$-algebra
 $$A=\bigoplus_{i=0}^{\infty}A_i$$
with $A_0=\C$, and $A$ generated by $A_1$, where $n\coloneqq\dim_\C(A_1)<\infty$. Then each $A_i$ is a finite-dimensional $\C$-vector space.
\begin{defi}
    We define the Hilbert series of $A$ as the formal power series
$$
    h_A(t)\coloneqq \sum_{i=0}^{\infty}\dim(A_i)t^i.
$$
\end{defi}
There is a natural surjective homomorphism $\Sym^\bullet A_1\twoheadrightarrow A$ induced by the inclusion $A_1\hookrightarrow A$ and the universal property of $\Sym^\bullet A_1$. Geometrically, this yields an embedding of affine schemes
$$
X\coloneqq\Spec(A)\hookrightarrow \mathbb{A}^n=\Spec(\Sym^\bullet A_1).
$$
The affine scheme $X$ carries a $\mathbb{G}_m$-action given by $z.(a_i)_{i=0}^\infty\coloneqq(z^ia_i)_{i=0}^\infty$, and $X$ is the affine cone over the projective scheme $\Proj(A)\subset \mathbb{P}(\mathbb{A}^n)$.

\begin{coro}\label{Hilbert}
    The homogeneous coordinate ring of the highest weight variety $\C[X_\lambda]$ is isomorphic to a graded algebra
    $
        \bigoplus_{k=0}^{\infty}V_{k\lambar}
    $
    with a $\Lieg$-invariant multiplication. So, the Hilbert series for the graded algebra $\C[X_\lambda]$ is
    $$
        h_{\C[X_\lambda]}(t)=\sum_{k=0}^{\infty}\dim(V_{k\lambar})t^k.
    $$
\end{coro}
\begin{proof}
    Recall that for each $k$, we have $\Sym^k(V_\lambda^*)=V_{k\lambar}\oplus C_{k\lambar}.$ So our statement follows immediately from Proposition \ref{prop1} and the fact that the coordinate ring $$\C[X_\lambda]\cong\Sym(V^*_\lambda)/I$$ as graded algebras.
\end{proof}

\section{The Hilbert Series of $\C_{\mathrm{hom}}[Gr(d,\C^{n+1+d})]$.}
Let $Gr(d,n+d+1)$ be the Grassmannian variety consisting of $d$-dimensional subspaces in $\C^{n+d+1}$. The goal of this section is to compute the Hilbert series of the homogeneous coordinate ring $\C_{\mathrm{hom}}[Gr(d,\C^{n+1+d})]$ of the Grassmannian $Gr(d,n+d+1)$.

There are two generalizations of the Catalan numbers in two directions.
\begin{defi}\label{Cat}
    For any irreducible finite root system $\Phi$ of type $X$ and rank $n$, the Catalan number of type $X_n$ is defined as
$$
    \Cat_{X_n}\coloneqq\prod_{i=1}^n\frac{e_i+h+1}{e_i+1},
$$
where $e_1,\cdots,e_n$ are the exponents of $\Phi$, and $h$ is the Coxeter number of $\Phi$.
\end{defi}
So in type $A_n$, we have $$\Cat_{A_n}=\prod_{i=1}^{n}\frac{i+(n+1)+1}{i+1}=\frac{1}{n+2}\binom{2n+2}{n+1}=\Cat_{n+1}.$$
Similarly, Narayana numbers of $A$-type are
$$
    N_{A_n}(k)\coloneqq N_{n+1,k}=\frac{1}{n+1}\binom{n+1}{k}\binom{n+1}{k+1}.
$$
\begin{defi} Let $d\geq2$.
    The $d$-dimensional Catalan numbers \cite{Sulanke} are defined as
$$C_{d,n}\coloneqq(dn)!\prod_{i=0}^{d-1}\frac{i!}{(n+i)!},$$ 
so that the original Catalan number $C_n$ becomes a special case $C_{2,n}$.
    Similarly, for $0\leq k\leq(d-1)(n-1)$ the $d$-dimensional Narayana numbers \cite{Sulanke} can be defined as
$$
    N_{d,n,k}\coloneqq\sum_{j=0}^k(-1)^{k-j}\binom{dn+1}{k-j}\prod_{i=0}^{d-1}\binom{n+i+j}{n}\binom{n+i}{n}^{-1}.
$$
    And we define $N_{A_n}(d,k)\coloneqq N_{d,n+1,k},$ where $0\leq k\leq (d-1)n$.
\end{defi}

Take $\Lieg=\slnnd(\C)$. We fix the standard choice of simple roots $\Pi=\{\alpha_1,\alpha_2,\cdots,\alpha_{n+d}\}$ and let $\omega_1,\cdots,\omega_{n+d}$ denote the corresponding fundamental weights.
Let $\rho$ be the half sum of the positive roots $\Phi^+\subset\Lieh^*$. 
The affine cone variety $X(d,n+d+1)$ (under the Pl\"ucker embedding) of $Gr(d,n+d+1)$ is identified with the decomposable elements in $\Wedge^d(\C^{n+d+1})$, which is isomorphic to the highest weight variety $X_{\omega_d}$ for the $\slnnd$-representation $V_{\omega_d}=\Wedge^d(\C^{n+d+1})$.

\begin{lemm}\label{dimrep3}
    Let $k\in\N$. The irreducible $\slnnd$-representation $V_{k\omega_{n+1}}$ has dimension
    $$\dim(V_{k\omega_{n+1}})=\displaystyle\sum_{i=0}^{k}N_{A_n}(d,i)\binom{d(n+1)+k-i}{d(n+1)}.$$
\end{lemm}
\begin{proof}
Recall that $\rho=\omega_1+\cdots+\omega_{n+d}$. Let $k\in\N$. By Weyl's dimension formula
\begin{align}\label{Vkomega}
   &\quad \dim(V_{k\omega_{n+1}}) =\prod_{\alpha\in\Phi^+}\frac{\langle k\omega_{n+1} +\rho,\alpha\rangle}{\langle\rho,\alpha\rangle}
   =\prod_{\alpha\in\Phi^+}\frac{\langle k\omega_{n+1}+(\omega_1+\cdots+\omega_{n+d}),\alpha\rangle}{\langle\omega_1+\cdots+\omega_{n+d},\alpha\rangle}\nonumber\\
   & =\prod_{i=0}^{d-1}\prod_{j=0}^{n}\frac{k+(n+1)+i-j}{(n+1)-j}\
   =\prod_{i=0}^{d-1}\binom{k+(n+1)+i}{n+1}\binom{(n+1)+i}{n+1}^{-1}.
\end{align}
Now, by Proposition 4 in \cite{Sulanke}, we have
\[
    \prod_{i=0}^{d-1}\binom{k+(n+1)+i}{n+1}\binom{(n+1)+i}{n+1}^{-1}=\sum_{i=0}^{k}N_{A_n}(d,i)\binom{d(n+1)+k-i}{d(n+1)}.\qedhere
\]

\end{proof}

Now we can reprove Braun's result on the Hilbert series of Grassmannians.
\begin{theo}[Theorem 1 in \cite{Braun}]\label{Gr} 
    The Hilbert series for the homogeneous coordinate ring $\C_{\textrm{hom}}[Gr(d,n+d+1)]$ is
    $$
        h_{\C_{\textrm{hom}}[Gr(d,n+d+1)]}(t)=\frac{\sum_{\ i=0}^{(d-1)n} N_{A_n}(d,i)\,t^i}{(1-t)^{d(n+1)+1}},
    $$
where $N_{A_n}(d,i)$ are the $d$-dimensional Narayana numbers of type-$A$.
\end{theo}
\begin{proof}
Recall that the affine cone variety $X(d,n+d+1)$ of the Grassmannian $Gr(d,n+d+1)$ is isomorphic to the highest weight variety $X_{\omega_2}$ for the $\slnnd$-representation $V_{\omega_d}=\Wedge^d(\C^{n+d+1})$. Since $\Wedge^d(\C^{n+d+1})^*\cong\Wedge^{n+1}(\C^{n+d+1})$, we have $\overline{\omega_d}=\omega_{n+1}$. So by Corollary \ref{Hilbert} and Lemma \ref{dimrep3} we have
    \begin{align*}
        h_{\C[X(d,n+d+1)]}(t)& =\sum_{k=0}^{\infty}\dim(V_{k\omega_{n+1}})t^k
        =\sum_{k=0}^{\infty}\sum_{i=0}^{k}N_{A_n}(d,i)\binom{d(n+1)+k-i}{k-i}t^k\\
        & =\sum_{i=0}^{(d-1)n}N_{A_n}(d,i)\,t^i\ \sum_{j=0}^{\infty}\binom{d(n+1)+j}{j}t^{j} =\frac{\displaystyle\sum_{i=0}^{(d-1)n}N_{A_n}(d,i)t^i}{(1-t)^{d(n+1)+1}}.\qedhere
    \end{align*}
\end{proof}

\begin{coro}[Mukai's Theorem]\label{ThmGrMukai}
    The Hilbert series for the homogeneous coordinate ring $\C_{\textrm{hom}}[Gr(2,n+3)]$ is
    $$
        h_{\C_{\textrm{hom}}[Gr(2,n+3)]}(t)=\frac{\sum_{i=0}^{n}\frac{1}{n+1}\binom{n+1}{i}\binom{n+1}{i+1}\,t^i}{(1-t)^{2n+3}}.
    $$
    Note that the coefficients of the numerator are precisely the Narayana numbers of type $A_n$.
\end{coro}
\begin{rema}\label{remGrMukai}\sloppy
    Since the homogeneous coordinate ring of the Grassmannian $Gr(2,n+3)$ is well known to have a cluster algebra structure of type $A$, the number of clusters is always a Catalan number, which is the sum of the Narayana numbers in the same row (as appears in the numerator of the RHS of the above result). It is interesting to study further the relations between the above theorem and the cluster algebra structure. A similar remark will come up again in the next section.
\end{rema}\fussy

\begin{theo}\label{diffGr} Let $\partial_t:f(t)\mapsto f'(t)\in\End(\C[t])$ and $T:f(t)\mapsto tf(t)\in\End(\C[t])$. Then
    $$h_{\C_{\hom}[Gr(d,n+d+1)]}(t)=\frac{(\partial_t^d\circ T^{d-1})^n\left[(1-t)^{-(d+1)}\right]}{\prod_{i=1}^d \prod_{j=1}^n(i+j)}.$$
\end{theo}
\begin{proof}
    We prove the theorem by induction on $n$. For $n=0$, we have $Gr(d,d+1)\cong\mathbb{P}^{d}$, so $$\mathrm{LHS}=h_{\C[x_0,x_1,\cdots,x_d]}(t)=\displaystyle\frac{1}{(1-t)^{d+1}}=\mathrm{RHS}.$$
    Suppose the statement holds for the case $(n-1)$ for some $n\geq1$. By applying the induction hypothesis, the formula (\ref{Vkomega}), and the same formula with $n$ replaced by $(n-1)$, we have
    \begin{align*}
        \mathrm{RHS}& =\frac{(\partial_t^d\circ T^{d-1})h_{\C[X(d,n+d)]}(t)}{\prod_{i=1}^d(n+i)}=\frac{\sum_{k=0}^{\infty}\prod_{i=0}^{d-1}\binom{k+n+i}{n}\binom{n+i}{n}^{-1}(\partial_t^d\circ T^{d-1})t^k}{\prod_{i=0}^{d-1}(n+1+i)}\\
        & =\sum_{k=1}^{\infty}\prod_{i=0}^{d-1}\frac{1}{n+1}\binom{k+n+i}{n}\binom{n+1+i}{n+1}^{\!\!-1}\prod_{j=1}^d(k+d-j) t^{k-1}\\
        & =\sum_{k=0}^{\infty}\prod_{i=0}^{d-1}\frac{1}{n+1}\binom{k+1+n+i}{n}\binom{n+1+i}{n+1}^{\!\!-1}\prod_{j=0}^{d-1}(k+1+j) t^{k}\\
        & =\sum_{k=0}^{\infty}\prod_{i=0}^{d-1}\binom{k+(n+1)+i}{n+1}\binom{(n+1)+i}{n+1}^{-1}t^{k}
        =\mathrm{LHS}.\qedhere
    \end{align*}
\end{proof}
Then we have an immediate corollary of Theorems \ref{Gr} and \ref{diffGr}.
\begin{coro}
    The $d$-dimensional Narayana polynomial
    \begin{align*}
        N_{d,n}(t)\coloneqq \sum_{k=0}^{(d-1)(n-1)}N_{d,n,k}t^k
    \end{align*}
  equals to
    \begin{align*}
        \frac{(1-t)^{dn+1}(\partial_t^d\circ T^{d-1})^{n-1}\left[(1-t)^{-(d+1)}\right]}{\prod_{i=1}^d \prod_{j=1}^{n-1}(i+j)}.
    \end{align*}
\end{coro}
\begin{rema}
Although the proof is independent, this result seems to be closely related to Remark 3.3 and calculations in Table A.7 in \cite{Agapito}. 
\end{rema}

\section{The Hilbert Series of $\C[\Ominbar^{A_n}]$.}
Let $\Omin^{A_n}$ be the minimal (nonzero) nilpotent orbit in $\slnn(\C)$, so $\Omin^{A_n}$ consists of nilpotent elements of rank one in $\slnn(\C)$. Since there is a natural scaling $\C^*$-action on $\Ominbar^{A_n}$, its coordinate ring $\C[\Ominbar^{A_n}]$ is a graded ring, and the goal of this section is to compute its Hilbert series.

We first recall the identity of Li Shan-lan, which has numerous proofs, and here we present its first known (but very difficult to find in the literature) proof due to Paul Tur\'an \cite{Turan}.
\begin{theo}[Li Shan-lan's Identity]\label{Li} For $n,m\in\N$,
$$
    \binom{m+n}{n}^2=\sum_{i=0}^{n}\binom{n}{i}^2\binom{2n+m-i}{2n}.
$$
\end{theo}
\begin{proof}
    Let \begin{equation}
           V_n(x)\coloneqq\left.-\frac{1}{n!}\frac{d^n}{dz^{n}}\left(\frac{z^n}{(z+x)^{n+1}}\right)\right|_{z=-1}. 
    \end{equation}
    Then 
    \begin{align*}
        V_n(x)& =-\frac{1}{n!}\left.\frac{d^n}{dz^{n}}\left(\frac{1}{z}\left(1+\frac{x}{z}\right)^{-n-1}\right)\right|_{z=-1}\\
        &=-\frac{1}{n!}\left.\frac{d^n}{dz^{n}}\sum_{k=0}^\infty\binom{-n-1}{k}\frac{x^k}{z^{k+1}}\right|_{z=-1}\\
        &=-\frac{1}{n!}\sum_{k=0}^{\infty}(-1)^k\binom{n+k}{k}x^k\left.\frac{d^n}{dz^{n}}\left(z^{-k-1}\right)\right|_{z=-1}\\
        &=\sum_{k=0}^\infty\binom{n+k}{k}^2x^k.
    \end{align*}
    On the other hand, we can differentiate the RHS of (5.2) by Leibniz's rule directly and get
    \begin{align*}
        V_n(x)& =-\sum_{k=0}^n\binom{n}{k}\binom{2n-k}{k}(x-1)^{-2n+k-1}\\
        & =\left(\sum_{k=0}^n\binom{n}{k}\binom{n+k}{n}(x-1)^{n-k}\right)(1-x)^{-2n-1}\\
        & =(1-x)^n P_n\left(\frac{1+x}{1-x}\right)\sum_{k=0}^\infty\binom{2n+k}{2n}x^k,
    \end{align*}
where $P_n(t)$ is the $n$-th Legendre polynomial and the third equality is proved by checking $$P_n(t)=\sum_{k=0}^n\binom{n}{k}\binom{n+k}{n}(-1)^{n-k}\left(\frac{t+1}{2}\right)^k$$ satisfies the characterizing ODE for Legendre polynomial $$(1-t^2)P_n''(t)-2tP_n'(t)+n(n+1)P_n(t)=0$$ with conditions $P_n(-1)=(-1)^n$ and $P'_n(-1)=\frac{n(n+1)}{2}(-1)^{n+1}.$

By Hurwitz formula for Legendre polynomials
$$
    (1-x)^nP_n\left(\frac{1+x}{1-x}\right)=\sum_{i=0}\binom{n}{i}^2x^i.
$$
So 
\begin{align*}
    V_n(x)& =\sum_{i=0}^{n}\binom{n}{i}^2x^i\ \sum_{k=0}^\infty\binom{2n+k}{2n}x^k \\
    & = \sum_{m=0}^\infty\left(\sum_{i=0}^m\binom{n}{i}^2\binom{2n+m-i}{2n}\right)x^m
\end{align*}
Compare the coefficients of $x^m$ in the first calculation of $V_n(x)$ at the beginning of the proof, we have the desired result. (The upper limits of the sum in the RHS of Li Shan-lan's  identity can be chosen freely between $n$ and $m$, as $\binom{n}{i}=0$ for $i>n$ and $\binom{2n+m-i}{2n}=0$ for $i>m$.)
\end{proof}

Let $n\in\N$ and take $\Lieg=\slnn(\C)$. We fix the standard choice of simple roots $\Pi=\{\alpha_1,\alpha_2,\cdots,\alpha_{n}\}$ and let $\omega_1,\cdots,\omega_{n}$ denote the corresponding fundamental weights.
Let $\rho$ be the half sum of the positive roots $\Phi^+\subset\Lieh^*$. Let $\theta\coloneqq\alpha_1+\cdots+\alpha_n$ be the highest root and $0\neq v_\theta\in\Lieg$ be any highest root vector. Then $V_\theta=\Lieg$ is the adjoint representation, and the closure $\Ominbar^{A_n}$ is the highest weight variety $X_{\theta}$.

\begin{lemm}\label{dimrep2} Let $k\in\N$. Then
    $$\dim(V_{k\theta})=\sum_{i=0}^n\binom{n}{i}^2\binom{k-i+2n-1}{2n-1}$$
\end{lemm}
\begin{proof} We know the highest root $\theta=\omega_1+\omega_n$. Let $k\in\N$. By Weyl's dimension formula, 
\begin{align*}
   \dim(V_{k\theta})& =\prod_{\alpha\in\Phi^+}\frac{\langle k\theta+\rho,\alpha\rangle}{\langle\rho,\alpha\rangle}=\prod_{\alpha\in\Phi^+}\frac{\langle k(\omega_1+\omega_n)+\omega_1+\cdots+\omega_n,\alpha\rangle}{\langle\omega_1+\cdots+\omega_n,\alpha\rangle}\\
   & =\left(\frac{k+1}{1}\cdot\frac{k+2}{2}\cdots\frac{k+n-1}{n-1}\right)^2\cdot\frac{2k+n}{n}\\
   & =\binom{k+n-1}{n-1}^2\cdot\frac{(n+k)^2-k^2}{n^2}\\
   & =\binom{k+n}{n}^2-\binom{k-1+n}{n}^2.
\end{align*}
Now we apply the identity of Li Shan-lan (Theorem \ref{Li}) and get
\begin{align*}
\dim(V_{k\theta})
&=\sum_{i=0}^n\binom{n}{i}^2\left(\binom{k+2n-i}{2n}-\binom{k-1+2n-i}{2n}\right) \\
&=\sum_{i=0}^n\binom{n}{i}^2\binom{k-i+2n-1}{2n-1}.\qedhere
\end{align*}
\end{proof}

Recall Definition \ref{Cat}.
In type $B$, we have
$$
    \Cat_{B_n}\coloneqq\prod_{i=1}^{n}\frac{(2i-1)+2n+1}{(2i-1)+1}=\binom{2n}{n}.
$$
In type $G_2$, we have
$$
    \Cat_{G_2}=\left(\frac{1+6+1}{1+1}\right)\left(\frac{5+6+1}{5+1}\right)=8.
$$
    There are numerous equivalent definitions of the (generalized) Narayana numbers; we refer to Theorem 5.9 in \cite{FominReading} and references therein. 
    Narayana numbers of $B$-type are
$$
    \displaystyle N_{B_n}(k)\coloneqq\binom{n}{k}^2.
$$
Then we can check
$$
    \sum_{k=0}^n N_{B_n}(k)=\sum_{k=0}^n\binom{n}{k}\binom{n}{n-k}=\binom{2n}{n}=\Cat_{B_n}.
$$
Narayana numbers of type $G_2$ are 
$
    N_{G_2}(0)=1, N_{G_2}(1)=6, N_{G_2}(2)=1,
$
which sum to $\Cat_{G_2}$.

\begin{theo}\label{Omin}
Let $\Ominbar^{A_n}$ be the minimal (nonzero) nilpotent orbit in $\slnn$. Then the Hilbert series for the graded algebra $\C[\Ominbar^{A_n}]$ is
    $$
        h_{\C[\Ominbar^{A_n}]}(t)=\frac{\sum_{i=0}^{n}\binom{n}{i}^2t^i}{(1-t)^{2n}}.
    $$
Note that the coefficients of the numerator are precisely Narayana numbers of type $B_n$ (or equivalently of type $C_n$ as they have the same Weyl group).
\end{theo}
\begin{proof}
    Recall that $\Ominbar^{A_n}$ is the highest weight variety $X_{\theta}$. Since the adjoint representation $\Lieg$ is identified with $\Lieg^*$ via the Killing form, we have $\overline{\theta}=\theta$. So by Corollary \ref{Hilbert} and Lemma \ref{dimrep2} we have
    \begin{align*}
        h_{\C[\Ominbar^{A_n}]}(t)& =\sum_{k=0}^{\infty}\dim(V_{k\theta})t^k
        =\sum_{k=0}^{\infty}\sum_{i=0}^n\binom{n}{i}^2\binom{k-i+2n-1}{2n-1}t^k\\
        & =\sum_{i=0}^{n}\binom{n}{i}^2t^i\ \sum_{j=0}^{\infty}\binom{j+2n-1}{2n-1}t^{j}=\frac{\displaystyle\sum_{i=0}^{n}\binom{n}{i}^2t^i}{(1-t)^{2n}}.\qedhere
    \end{align*}
\end{proof}
\begin{conj}
The coordinate ring $\C[\Ominbar^{A_n}]$ has a degenerate version (by setting some frozen variables to zero) of a cluster algebra structure of type $C_n$.
\end{conj}
\begin{rema} 
    In the special case where $n=2$ the above result said
    $$
        h_{\C[\Ominbar^{A_2}]}(t)=\frac{1+4t+t^2}{(1-t)^4}.
    $$
    We can do an analogous computation for $\Lieg=\mathfrak{so}(5,\C)$, so $\theta=\varepsilon_1+\varepsilon_2, \rho=\frac{1}{2}(3\varepsilon_1+\varepsilon_2)$ and
    \begin{align*}
        h_{\C[\Ominbar^{B_2}]}(t)
        &=\sum_{k=0}^{\infty}\dim(V_{k\theta})t^k \\
        &=\sum_{k=0}^{\infty}\prod_{\alpha\in\Phi^+}\frac{\langle k\theta+\rho,\alpha\rangle}{\langle\rho,\alpha\rangle}t^k \\
        &=\sum_{k=0}^{\infty}\binom{2k+3}{3}t^k
        =\frac{1+6t+t^2}{(1-t)^4}.
    \end{align*}
The numbers shown in the coefficients of the numerator are precisely Narayana numbers of type $G_2$. A similar question could be asked: does the coordinate ring of the minimal nilpotent orbit $\C[\Ominbar^{B_2}]$ have a degenerate version of a cluster algebra structure of type $G_2$?
\end{rema}

\begingroup
\sloppy
\setlength{\emergencystretch}{3em}
\raggedright
\bibliographystyle{plain}
\bibliography{samplebib}
\endgroup

\bigskip
\noindent Boming Jia \\Yau Mathematical Sciences Center,\\ Tsinghua University, Beijing 100084, China\\
jiabm@tsinghua.edu.cn

\end{document}